\newcommand{\R}{\mathbb{R}}
\newtheorem{Proposizione}{Proposition}
\newtheorem{Teorema}{Theorem}
\newtheorem{Corollario}{Corollary}
\title{On-flow and strong solutions\\
to Killing-type equations}
\author{Gianluca Gorni\\
Universit\`a di Udine\\
Dipartimento di Matematica e Informatica\\
via delle Scienze~208, 33100 Udine, Italy\\
\tt{gianluca.gorni@uniud.it}
\and
Gaetano  Zampieri\\
Universit\`a di Verona\\
Dipartimento di Informatica\\
strada Le Grazie 15, 37134 Verona, Italy\\
\tt{gaetano.zampieri@univr.it}}
\date{June 17, 2014}
\begin{document}
\maketitle

\begin{abstract}
If we impose infinitesimal invariance up to a boundary term of the action functional for Lagrangian ordinary differential equations, we are led to Killing-type equations, which are related to first integrals through Noether theorem. We review the ``on-flow'' and ``strong'' interpretations of the Killing-type equation, and for each we detail the complete explicit structure of the solution set in terms of the associated first integral. We give examples that reappraise the usefulness of the ``on-flow'' solutions. Finally, we describe an equivalent alternative approach to variational invariance.
\end{abstract}

Keywords: Noether variational theorem; Killing-type equations; Laplace-Runge-Lenz vector.

AMS subject classification: 34C14; 70H33.

This work was done under the auspices of the INDAM (Istituto Nazionale di Alta Matematica). The authors are grateful to Prof.~Giuseppe Gaeta for helpful discussion.

\section{Introduction}\label{introduction}

Suppose we are given a smooth Lagrangian function $L(t,q,\dot q)$, with $t\in\R$, $q,\dot q\in\R^n$. The variational principle for Lagrangian dynamics posits that
\begin{equation}\label{variationalPrinciple}
  \delta\int_{t_1}^{t_2}L\bigl(t,q(t),\dot q(t)\bigr)dt=0,
\end{equation}
which is equivalent to the \emph{Euler-Lagrange equation}
\begin{equation}\label{Lagrange}
  \frac{d}{dt}\partial_{\dot q}L\bigl(t,q(t),\dot q(t)\bigr)
  -\partial_{q}L\bigl(t,q(t),\dot q(t)\bigr)=0.
\end{equation}
We will assume that the Euler-Lagrange equation can be put into normal form
\begin{equation}\label{normalformLagrange}
  \ddot q=\Lambda(t,q,\dot q),
\end{equation}

Following closely the notation of Sarlet and Cantrijn~\cite{SarletCantrijn} (except that $\dot q$ is an independent variable from the outset), we consider an \emph{infinitesimal transformation} in the $(t,q)$ space given by
\begin{equation}\label{infinitesimalTransformation}
  \bar t=t+\varepsilon \tau(t,q,\dot q),\qquad
  \bar q=q+\varepsilon\xi(t,q,\dot q).
\end{equation}
This transformation is said to leave the action integral (infinitesimally) \emph{invariant up to boundary terms} (using the nomenclature recommended by P.G.L.~Leach), if a function $f(t,q,\dot q)$ exists, such that for the given smooth curve $t\mapsto q(t)$ we have
\begin{multline}\label{infinitesimalInvariance}
  \int_{\bar t_1}^{\bar t_2}L\Bigl(\bar t,\bar q(\bar t),
  \frac{d\bar  q}{d\bar t}(\bar t)\Bigr)d\bar t=
  \int_{t_1}^{t_2}L\bigl(t,q(t),\dot q(t)\bigr)dt+{}\\
  +\varepsilon\int_{t_1}^{t_2}
  \frac{df}{dt}\bigl(t,q(t),\dot q(t)\bigr)dt
  +O(\varepsilon^2),
\end{multline}
which is equivalent to the following \emph{Killing-type equation} for ODEs:
\begin{equation}\label{Killing-type}
  \tau\partial_t L
  +\partial_q L\cdot \xi
  +\partial_{\dot q}L\cdot
  \bigl(\dot \xi-\dot q\dot\tau\bigr)
  +L\dot\tau=
  \dot f.
\end{equation}
This is the same formula as Sarlet and Cantrijn~\cite{SarletCantrijn}, formula~(9) p.~471. We will write $\partial_{t}, \partial_{q}, \partial_{\dot q}$ for the partial derivative and gradients, $\dot x$ for the total time derivative of the function~$x$, and $x\cdot y$ will denote the ordinary scalar product of $x,y\in\R^n$.

Noether's theorem states that equation~\eqref{Killing-type} is a sufficient condition so that the function
\begin{equation}\label{firstintegral}
  N=f-L\tau-\partial_{\dot q}L\cdot\bigl(\xi-\dot q\tau\bigr)
\end{equation}
(\cite[p.~471, formula~(11)]{SarletCantrijn}) be a \emph{constant of motion} for the solutions to the Lagrange equation~\eqref{Lagrange}.

The function~$L(t,q,\dot q)$ will be given, and we solve Killing-type equation~\eqref{Killing-type} for the triple $(\tau,\xi,f)$. The equation in the terse form~\eqref{Killing-type} is open to at least three interpretations that we know of, differing on what the independent variables are and on how to treat the $\ddot q$ terms.

The most restrictive approach is when we seek $\tau,\xi, f$ as functions of $(t,q)$ only: ($\tau(t,q),\xi(t,q),f(t,q)$). Since in this case $\ddot q$ does not appear, the independent variables are $t,q,\dot q$ only, and we want the equation to hold identically. We will be concerned with this approach mainly in Section~\ref{strongIndependentOfDotQ}.

If we instead allow full dependence of $\tau,\xi,f$ on~$\dot q$, the expanded-out form of the total time derivatives $\dot\xi,\dot \tau,\dot f$ will contain~$\ddot q$. We distinguish two alternatives:

\begin{itemize}

\item \emph{Strong form}: we treat $\ddot q$ as just another independent variable and require the equation to hold for all $t,q,\dot q,\ddot q$. This way the Hamiltonian action functional will be infinitesimally invariant along all smooth trajectories~$q(t)$. The strong form was introduced by Djukic~\cite{Djukic}, and studied also by Kobussen~\cite{Kobussen}. Since the equation depends linearly on~$\ddot q$, it is equivalent to a system of $n+1$ equations that do not contain~$\ddot q$, but we will not exploit this fact in the sequel.

\item \emph{On-flow form}: we replace every occurrence of $\ddot q$ with $\Lambda(t,q,\dot q)$ from the normalized Lagrange equation~\eqref{normalformLagrange}, and require the resulting equation to hold for all $t,q,\dot q$. This way the Hamiltonian action functional will be infinitesimally invariant as in~\eqref{infinitesimalInvariance} only along the Lagrangian motions~$q(t)$, a condition which is however enough for the expression~\eqref{firstintegral} to be a first integral.

\end{itemize}

In Sections~\ref{KillingSection} and~\ref{strongIndependentOfDotQ} of this work we give a full description of the structure of the solution sets of the Killing-like equation, in the above senses, assuming that we are given both $L$ and the first integral~$N$. This problem is also known as ``reverse Noether theorem''. The main results of Sections~\ref{KillingSection} and~\ref{strongIndependentOfDotQ}, specially for the strong form, are basically contained already in Sarlet and Cantrijn's 1981 paper~\cite{SarletCantrijn}, where they are deduced as a corollaries of the theory of~$d\theta$-symmetries. Here we make a direct derivation, which we hope will be helpful to some readers. 

In Section~\ref{examplesSection} we illustrate the general theory with examples. In Subsection~\ref{FreeParticle} we re-examine the free particle's notorious ``non-noetherian'' symmetries that were ``noetherized'' by P.G.L.~Leach~\cite{Leach2} by substituting a new Lagrangian for the usual one. We will see what we can say about those symmetries from the point of view of on-flow and strong solutions without switching Lagrangian.

In Subsection~\ref{superintegrableSubSection} we search for classes of superintegrable systems among the ones of the form $\ddot x=-G(x)$, $\ddot y=-G'(x)y$, by making a suitable ansatz on the solution triple $(\tau,\xi,f)$ in the on-flow form. For the sake of clarity we will stop short of pursuing the method beyond known territory. The on-flow form of Killing-like equation was dismissed by most authors, the reason being that it has too many solution. Our point is that a large solution set can work to our advantage when we do not know exactly the form of the system and of the first integral, because an ansatz has more chances of catching a solution when the solutions are plenty.

Subsection~\ref{LRLSubSection} is devoted to the Laplace-Runge-Lenz vector conservation of the classical Kepler problem. We review some known formulas for solutions to Killing-like equation in either on-flow or strong interpretation, and propose some new ones in dimension~3, that feel simpler to us.

In Section~\ref{differentTimeChangeSection} we return to an alternative way of performing time change in the action integral, that we proposed in a recent paper~\cite{GorniZampieri}. We show how this approach leads to a different, but equivalent, Killing-like equation and to some formulas that match closely with formulas written in a 1972 paper~\cite{Candotti} by Candotti, Palmieri and Vitale.

\section{Structure of the solution sets}\label{KillingSection}

Equation~\eqref{Killing-type} is referred to as ``Killing-type'' because of the important particular case when the Lagrangian function is a quadratic form in the~$\dot q$ variable: $L=\frac12\dot q\cdot A(q)\dot q$, with $A(q)$ symmetric $n\times n$ non-singular matrix. The Lagrange equation~\eqref{normalformLagrange} reduces to the equation of geodesics. Equation~\eqref{Killing-type} with $\tau\equiv 0$, $f\equiv 0$, and $\xi(q)$ as a function of $q$~only, becomes $\partial_q L\cdot \xi(q)+ \partial_{\dot q}L\cdot\xi'(q)\dot q=0$, which is quadratic homogeneous in~$\dot q$: if we equate to zero the coefficients, we get the well-known Killing equations of Differential Geometry. The first integral~\eqref{firstintegral} simplifies to $-\partial_{\dot q} L\cdot \xi(q)=-A(q)\dot q\cdot \xi(q)=-\dot q \cdot A(q) \xi(q)$. 

Back to the general Killing-type equation~\eqref{Killing-type}, we will tacitly assume that $L\in C^3$ and that the following usual regularity condition is satisfied:
\begin{equation}\label{Legendre}
  \det g\ne0\qquad
  \text{where }g:=\partial^2_{\dot q,\dot q}L(t,q,\dot q).
\end{equation}
The notation $\partial^2_{\dot q,\dot q}L$ means the Hessian matrix of the second derivatives of~$L$ with respect to~$\dot q$. This will ensure that the Lagrange equation can indeed be put into normal form~\eqref{normalformLagrange}, and that there is existence and uniqueness of the solutions to the Cauchy problems.

The following result was basically found by Lutzky~\cite[formula~(19)]{Lutzky}, who uses it to argue that it is not useful to allow $f$ to depend on~$\dot q$.

\begin{Teorema}[General solution of the on-flow equation]\label{familyOfSolutionsWithLagrange}
Let $L(t,q,\dot q)$ be a Lagrangian function. Suppose that the Lagrange equation has the $C^1$ first integral $N(t,q,\dot q)$. Then a triple $(\tau,\xi,f)$ is a solution of the on-flow version of the Killing-type equation with~$N$ as associated first integral if and only if
\begin{equation}\label{familyFormula}
  f=\tau L+N+\partial_{\dot q}L\cdot(\xi-\tau\dot q).
\end{equation}
\end{Teorema}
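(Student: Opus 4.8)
The plan is to recognize that formula~\eqref{familyFormula} is nothing but the Noether first-integral definition~\eqref{firstintegral} solved for~$f$: rearranging $N=f-L\tau-\partial_{\dot q}L\cdot(\xi-\dot q\tau)$ yields $f=\tau L+N+\partial_{\dot q}L\cdot(\xi-\tau\dot q)$. Thus saying that the triple $(\tau,\xi,f)$ has $N$ as its associated first integral, in the sense of~\eqref{firstintegral}, is \emph{literally} the same as saying that~\eqref{familyFormula} holds. The whole theorem therefore reduces to the claim that, once $N$ is known to be a first integral, the on-flow version of~\eqref{Killing-type} is equivalent to the requirement that the Noether expression~\eqref{firstintegral} coincide with~$N$.

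The heart of the argument is a single direct computation: differentiate $N=f-L\tau-\partial_{\dot q}L\cdot(\xi-\dot q\tau)$ totally in time, treating $N,f,\tau,\xi$ as functions of $(t,q,\dot q)$ and using $\dot x=\partial_t x+\partial_q x\cdot\dot q+\partial_{\dot q}x\cdot\ddot q$. Applying the product rule and substituting the Euler--Lagrange relation in the form $\frac{d}{dt}\partial_{\dot q}L=\partial_q L$ (legitimate on-flow) together with $\dot L=\partial_t L+\partial_q L\cdot\dot q+\partial_{\dot q}L\cdot\ddot q$, I expect the terms carrying $\ddot q$ (equivalently $\Lambda$ after the on-flow substitution) to cancel in pairs, leaving exactly
\[
  \dot N=\dot f-\Bigl(\tau\partial_t L+\partial_q L\cdot\xi+\partial_{\dot q}L\cdot(\dot\xi-\dot q\dot\tau)+L\dot\tau\Bigr).
\]
In other words, $\dot N$ equals $\dot f$ minus the left-hand side of~\eqref{Killing-type}.

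With this identity in hand both implications follow. If~\eqref{familyFormula} holds, then the Noether expression equals the given first integral $N$, so $\dot N=0$ on-flow, and the identity forces the left-hand side of~\eqref{Killing-type} to equal $\dot f$, i.e.\ the on-flow Killing-type equation holds. Conversely, if $(\tau,\xi,f)$ solves the on-flow equation with $N$ as its associated first integral, then~\eqref{firstintegral} reads $N=f-L\tau-\partial_{\dot q}L\cdot(\xi-\dot q\tau)$, which is~\eqref{familyFormula}. Here the regularity condition~\eqref{Legendre}, guaranteeing the normal form~\eqref{normalformLagrange} and existence--uniqueness, is what lets us pass freely between ``$\dot N=0$ along every Lagrangian motion'' and ``$\dot N=0$ identically in $(t,q,\dot q)$ after replacing $\ddot q$ by $\Lambda$'', so that the on-flow equation is genuinely an identity in $(t,q,\dot q)$.

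The main obstacle is purely the bookkeeping in the computation of $\dot N$: one must apply the product rule carefully to the term $\partial_{\dot q}L\cdot(\xi-\dot q\tau)$, keep track of the $\ddot q$-dependent pieces coming from $\dot L$, from $\frac{d}{dt}\partial_{\dot q}L$, and from $\dot\tau,\dot\xi$, and verify that they cancel precisely. No further structure of $L$ or of $N$ is needed beyond the Euler--Lagrange relation, so once the cancellation is checked the equivalence is immediate.
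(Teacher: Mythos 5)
Your proposal is correct and follows essentially the same route as the paper: the ``only if'' direction is the observation that \eqref{familyFormula} is just \eqref{firstintegral} rearranged, and the ``if'' direction rests on the same on-flow computation (product rule on $\partial_{\dot q}L\cdot(\xi-\dot q\tau)$, the Euler--Lagrange substitution $\frac{d}{dt}\partial_{\dot q}L=\partial_qL$, and the chain rule for $\dot L$), merely packaged as the identity $\dot N=\dot f-\bigl(\tau\partial_tL+\partial_qL\cdot\xi+\partial_{\dot q}L\cdot(\dot\xi-\dot q\dot\tau)+L\dot\tau\bigr)$ rather than as a step-by-step cancellation ending in the chain rule. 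The identity you state is verified by exactly the cancellations the paper performs, so the argument is sound.
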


\begin{proof}
Equation~\eqref{familyFormula} is simply a rearrangement of formula~\eqref{firstintegral}. If the triple $(\tau,\xi,f)$ is a solution associated with~$N$ then it must satisfy~\eqref{familyFormula}.

Conversely, suppose that the triple satisfies~\eqref{familyFormula} and let us check that it is an on-flow solution. Taking the time derivative of~$f=\tau L+N+ \partial_{\dot q} L\cdot(\xi-\tau\dot q)$ along a solution of Lagrange equation and replacing into Killing-type equation~\eqref{Killing-type}
\begin{multline}
  \tau\partial_t L
  +\partial_q L\cdot \xi
  +\partial_{\dot q}L\cdot
  \bigl(\dot \xi-\dot q\dot \tau\bigr)
  +L\dot \tau=\\
  =\dot f=\dot \tau L+\tau \dot L+0+
  \Bigl(\frac{d}{dt}\partial_{\dot q} L\Bigr)\cdot(\xi-\tau \dot q)+
  \partial_{\dot q} L\cdot(\dot\xi-\dot \tau \dot q-\tau \ddot q)
\end{multline}
Canceling out the common terms and using Lagrange equation we get
\begin{equation}
  \tau\partial_t L 
  +\partial_q L\cdot \xi
  =\tau\dot L+
  \Bigl(\frac{d}{dt}\partial_{\dot q} L\Bigr)\cdot(\xi-\tau \dot q)+
  \partial_{\dot q} L\cdot(-\tau \ddot q).
\end{equation}
Using Lagrange equation~\eqref{Lagrange} this becomes
\begin{equation}
  \tau\partial_t L 
  +\partial_q L\cdot \xi
  =\tau \dot L+
 \partial_{q} L\cdot(\xi-\tau \dot q)-
  \tau \partial_{\dot q} L\cdot\ddot q.
\end{equation}
Canceling out and rearranging we get
\begin{equation}
  \tau \bigl(\partial_t L+\partial_{q} L\cdot q+
  \partial_{\dot q} L\cdot\ddot q\bigr)
  =\tau\dot L,
\end{equation}
which is simply the chain rule.
\end{proof}

From formula~\eqref{familyFormula} we can express $\tau $ as a function of $f,\xi$, at least when $L-\partial_{\dot q}L\cdot\dot q\ne0$. Of particular interest are the solutions with $f=0$:

\begin{Corollario}[Simplest on-flow solution with $f=0$]\label{existenceOnFlow}
Let $L$ be a Lagrangian function. Suppose that the Lagrange equation has the $C^1$ first integral $N$. Then the on-flow version of the Killing-type equation~\eqref{Killing-type} is satisfied by the triple
\begin{equation}\label{solutionTriple}
  \tau =-\frac{N}{L},\qquad
  \xi=-\frac{N}{L}\dot q,\qquad
  f\equiv 0.
\end{equation}
The corresponding first integral~\eqref{firstintegral} is precisely~$N$.
\end{Corollario}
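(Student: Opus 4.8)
The plan is to derive this directly from Theorem~\ref{familyOfSolutionsWithLagrange}, which already characterizes every on-flow solution with $N$ as associated first integral by the single equation~\eqref{familyFormula}. So rather than re-verify the Killing-type equation by hand, I would simply exhibit the proposed triple and check that it satisfies~\eqref{familyFormula} with $f\equiv0$; the theorem then guarantees both that it is an on-flow solution and that its first integral is exactly~$N$.

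The guiding idea for constructing the triple is that the only awkward term in~\eqref{familyFormula} is $\partial_{\dot q}L\cdot(\xi-\tau\dot q)$, so the cleanest route to $f\equiv0$ is to make it vanish. I would therefore impose $\xi=\tau\dot q$, which kills that term identically (without any assumption on~$L$). Under this choice formula~\eqref{familyFormula} collapses to $f=\tau L+N$, and demanding $f\equiv0$ forces $\tau=-N/L$, hence $\xi=\tau\dot q=-(N/L)\dot q$. This recovers precisely the triple~\eqref{solutionTriple}, so no guessing is needed: the triple is the unique on-flow solution of the form $\xi=\tau\dot q$ with $f\equiv0$.

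To confirm the last assertion of the statement I would substitute the triple directly into the first-integral formula~\eqref{firstintegral}. Since $\xi-\dot q\tau=0$ by construction and $f\equiv0$, formula~\eqref{firstintegral} reduces to $N=-L\tau=-L\cdot(-N/L)=N$, so the associated first integral is indeed~$N$, as claimed. Equivalently, one can observe that the triple was built to satisfy~\eqref{familyFormula} and simply invoke Theorem~\ref{familyOfSolutionsWithLagrange}.

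The only genuine point requiring care is the \emph{well-definedness} of the expressions $-N/L$ and $-(N/L)\dot q$, which presupposes $L\neq0$ on the region of interest; this is exactly the hypothesis anticipated in the sentence preceding the corollary (``at least when $L-\partial_{\dot q}L\cdot\dot q\ne0$'', with the sharper requirement $L\neq0$ here). I do not expect any real obstacle beyond this caveat: once $L\neq0$ is granted, the verification is a short substitution, and the heavy lifting has already been done in proving Theorem~\ref{familyOfSolutionsWithLagrange}.
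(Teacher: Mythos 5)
Your proposal is correct and matches the paper's intent exactly: the corollary is stated without a separate proof precisely because, as you observe, the triple~\eqref{solutionTriple} makes $\xi-\tau\dot q=0$ and hence satisfies~\eqref{familyFormula} with $f\equiv0$, so Theorem~\ref{familyOfSolutionsWithLagrange} delivers both conclusions at once. Your caveat about $L\ne0$ is also the one the paper itself raises (in the remark immediately following the corollary, via the substitution $L\mapsto L+c$).
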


Around points where $L=0$ we can take $\tau =-N/(L+c)$, $\xi=- N/ (L+c)$, for a constant $c\ne0$.

\begin{Corollario}[More general on-flow solution with $f=0$]\label{existenceOnFlow2}
Let $L(t,q,\dot q)$ be a Lagrangian function. Suppose that the Lagrange equation has the $C^1$ first integral $N(t,q,\dot q)$. Let $R(t,q,\dot q)$ be an arbitrary smooth function with values in~$\R^n$. Then the on-flow version of the Killing-type equation~\eqref{Killing-type} is satisfied by the triple
\begin{equation}\label{solutionTriple2}
  \tau (t,q,\dot q)=
  -\frac{N+\partial_{\dot q}L\cdot R}{L},\qquad
  \xi(t,q,\dot q)=R-\dot q
  \frac{N+\partial_{\dot q}L\cdot R}{L},\qquad
  f\equiv 0.
\end{equation}
The corresponding first integral~\eqref{firstintegral} is precisely~$N$.
\end{Corollario}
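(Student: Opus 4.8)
The plan is to deduce the corollary directly from Theorem~\ref{familyOfSolutionsWithLagrange} rather than going back to the Killing-type equation~\eqref{Killing-type} itself. That theorem already characterizes the on-flow solutions associated with a given first integral~$N$ by the single algebraic identity~\eqref{familyFormula}, namely $f=\tau L+N+\partial_{\dot q}L\cdot(\xi-\tau\dot q)$. So the entire task reduces to checking that the proposed triple~\eqref{solutionTriple2}, together with $f\equiv0$, satisfies~\eqref{familyFormula}; the conclusion that $N$ is the associated first integral then comes for free from the ``if'' direction of the theorem.

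The computation hinges on one small observation that makes everything collapse. Abbreviating $\mu:=(N+\partial_{\dot q}L\cdot R)/L$, the proposed triple reads $\tau=-\mu$ and $\xi=R-\mu\dot q$. I would first evaluate the combination $\xi-\tau\dot q$ that enters~\eqref{familyFormula}: since $\tau=-\mu$, we have $\xi-\tau\dot q=(R-\mu\dot q)+\mu\dot q=R$, so the two $\mu\dot q$ terms cancel and the $\dot q$-dependence introduced by $\mu$ disappears from this expression.

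Substituting into the right-hand side of~\eqref{familyFormula} then yields $\tau L+N+\partial_{\dot q}L\cdot R=-\mu L+N+\partial_{\dot q}L\cdot R$. By the very definition of~$\mu$ we have $\mu L=N+\partial_{\dot q}L\cdot R$, so the first term exactly cancels the remaining two, leaving $0$, which matches the prescribed $f\equiv0$. Theorem~\ref{familyOfSolutionsWithLagrange} then guarantees that the triple is an on-flow solution of~\eqref{Killing-type} whose associated first integral~\eqref{firstintegral} is precisely~$N$.

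I do not expect any genuine obstacle: the whole content is the cancellation $\xi-\tau\dot q=R$, which is forced by the common factor $\mu$ shared between $\tau$ and the $\dot q$-part of~$\xi$. The only point needing care is the division by~$L$, so the identity is to be read wherever $L\neq0$; as in the remark following Corollary~\ref{existenceOnFlow}, near zeros of~$L$ one would instead divide by $L+c$ with a constant $c\neq0$. As a sanity check I would finally note that setting $R\equiv0$ recovers Corollary~\ref{existenceOnFlow}, confirming that~\eqref{solutionTriple2} is a genuine generalization of~\eqref{solutionTriple}.
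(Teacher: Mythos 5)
Your proposal is correct and is essentially the argument the paper intends: the corollary is stated as an immediate consequence of Theorem~\ref{familyOfSolutionsWithLagrange}, and the whole content is the verification that the triple~\eqref{solutionTriple2} satisfies the identity~\eqref{familyFormula}, which your cancellation $\xi-\tau\dot q=R$ and $\tau L=-(N+\partial_{\dot q}L\cdot R)$ accomplishes. The caveat about dividing by $L$ and the sanity check $R\equiv0$ recovering Corollary~\ref{existenceOnFlow} are both consistent with the paper's remarks.
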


The solutions of the strong Killing equation were given indirectly by Sarlet and Cantrijn~\cite[Th.~6.1]{SarletCantrijn} as a consequence of a result on $d\theta$-symmetries. Here we give a direct formulation and proof.

\begin{Teorema}[General solution of the strong equation]\label{familyOfSolutionsWithoutLagrange}
Let $L(t,q,\dot q)$ be a Lagrangian function. Suppose that the Lagrange equation has the $C^2$ first integral $N(t,q,\dot q)$. Then a triple $(\tau ,\xi,f)$ is a solution of the strong version of the Killing-type equation with~$N$ as associated first integral if and only if
\begin{gather}\label{strongSolutionCondition}
  \xi= \tau \dot q-g^{-1}\partial_{\dot q}N,\\
  f=\tau L+N-\partial_{\dot q}L\cdot g^{-1}\partial_{\dot q}N,
  \label{strongSolutionGauge}
\end{gather}
where $g=\partial^2_{\dot q,\dot q}L$ is the Hessian matrix as in~\eqref{Legendre}.
\end{Teorema}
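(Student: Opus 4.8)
The plan is to exploit the fact that the strong equation decomposes into the on-flow equation plus one extra vector condition, and then to identify that extra condition explicitly. The Killing-type equation~\eqref{Killing-type} is affine in $\ddot q$, since $\ddot q$ enters only through the total derivatives $\dot\tau,\dot\xi,\dot f$, each of which is linear in $\ddot q$ via $\dot x=\partial_t x+\partial_q x\cdot\dot q+\partial_{\dot q}x\cdot\ddot q$. Consequently, requiring \eqref{Killing-type} to hold for all $\ddot q$ is equivalent to requiring both that the vector coefficient of $\ddot q$ vanish identically and that the equation hold for one particular value of $\ddot q$; choosing that value to be $\ddot q=\Lambda(t,q,\dot q)$ from~\eqref{normalformLagrange} recovers precisely the on-flow equation. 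Thus the strong equation is the on-flow equation together with the single vector condition that the coefficient of $\ddot q$ equal zero.

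With this in hand I would handle the on-flow part using Theorem~\ref{familyOfSolutionsWithLagrange}. That $N$ is the associated first integral means, by definition via~\eqref{firstintegral}, the gauge relation~\eqref{familyFormula}, namely $f=\tau L+N+\partial_{\dot q}L\cdot(\xi-\tau\dot q)$; and Theorem~\ref{familyOfSolutionsWithLagrange} asserts that this relation alone already forces the on-flow equation to hold. Hence, under~\eqref{familyFormula}, being a strong solution reduces to the single requirement that the $\ddot q$-coefficient vanish. Moreover \eqref{strongSolutionGauge} is merely~\eqref{familyFormula} with its last term rewritten once~\eqref{strongSolutionCondition} is known, since $\partial_{\dot q}L\cdot(\xi-\tau\dot q)=-\partial_{\dot q}L\cdot g^{-1}\partial_{\dot q}N$ under~\eqref{strongSolutionCondition}. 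So the whole theorem reduces to showing that, given~\eqref{familyFormula}, the vanishing of the $\ddot q$-coefficient is equivalent to~\eqref{strongSolutionCondition}.

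The core computation is then to expand the three total derivatives in~\eqref{Killing-type}, collect every term multiplying $\ddot q$, and set the resulting vector to zero. Writing $w:=\xi-\tau\dot q$ for the combination already appearing in~\eqref{familyFormula}, and using \eqref{familyFormula} to compute $\partial_{\dot q}f$, I expect the contributions in $\partial_{\dot q}\tau$ and in $\partial_{\dot q}L$ to cancel against one another and to leave only $g\,w+\partial_{\dot q}N=0$. The one step that needs care is that differentiating the product $\partial_{\dot q}L\cdot w$ with respect to $\dot q$ yields, by the product rule and the definition $g=\partial^2_{\dot q,\dot q}L$, precisely the term $g\,w$, while the accompanying Jacobian-of-$w$ term is annihilated by the matching contribution coming from $(\partial_{\dot q}\xi)^{\!\top}\partial_{\dot q}L$. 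The regularity condition~\eqref{Legendre} then allows one to invert $g$ and solve $g\,w=-\partial_{\dot q}N$, giving~\eqref{strongSolutionCondition}; the hypothesis $N\in C^2$ is exactly what makes the resulting $\xi$ and $f$ of class $C^1$, so that their time derivatives in~\eqref{Killing-type} are legitimate.

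The main obstacle I anticipate is nothing conceptual but rather the linear-algebra bookkeeping of the $\ddot q$-linear part: tracking which quantities are vectors and which are Jacobian matrices, and placing the transposes correctly so that the cancellations become visible. Both implications follow from this single identity: for the ``if'' direction, substituting \eqref{strongSolutionCondition} makes the $\ddot q$-coefficient vanish identically, while for the ``only if'' direction the vanishing of that coefficient, combined with~\eqref{familyFormula}, forces $g\,w=-\partial_{\dot q}N$ and hence~\eqref{strongSolutionCondition}.
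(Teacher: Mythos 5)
Your proposal is correct, and it takes a route that the paper deliberately declines: the introduction remarks that the strong equation, being linear in $\ddot q$, splits into $n+1$ equations free of $\ddot q$, ``but we will not exploit this fact in the sequel.'' You exploit exactly that. You split the strong equation into (i) its value at $\ddot q=\Lambda$, which is the on-flow equation and is disposed of by citing Theorem~\ref{familyOfSolutionsWithLagrange} under the gauge relation~\eqref{familyFormula}, and (ii) the vanishing of the vector coefficient of $\ddot q$, which you compute by taking $\partial_{\dot q}$ of~\eqref{familyFormula}. I checked the bookkeeping you flag as the delicate step: with $w=\xi-\tau\dot q$, the $\ddot q$-coefficient of the left side is $(\partial_{\dot q}\xi)^{\top}\partial_{\dot q}L-(\partial_{\dot q}L\cdot\dot q)\,\partial_{\dot q}\tau+L\,\partial_{\dot q}\tau$, while $\partial_{\dot q}f$ computed from~\eqref{familyFormula} reproduces all of these terms plus $g\,w+\partial_{\dot q}N$, so the condition collapses to $g\,w=-\partial_{\dot q}N$ exactly as you predict; the $\tau\,\partial_{\dot q}L$ terms and the Jacobian-of-$w$ contribution cancel as you say. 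The paper instead runs one monolithic computation along a generic curve, never invoking Theorem~\ref{familyOfSolutionsWithLagrange}: it reduces the difference of the two sides to $\bigl(\partial_qL-\tfrac{d}{dt}\partial_{\dot q}L\bigr)\cdot w=\dot N$, then uses the identities $\partial_qL-\tfrac{d}{dt}\partial_{\dot q}L=g(\Lambda-\ddot q)$ and $\dot N=\partial_{\dot q}N\cdot(\ddot q-\Lambda)$ together with the symmetry of $g$ to obtain $\bigl(g\,w+\partial_{\dot q}N\bigr)\cdot(\Lambda-\ddot q)=0$ and concludes by arbitrariness of $\ddot q$. Your decomposition makes the logical structure more transparent (the new content of the strong form is isolated as one vector equation) and reuses earlier work, at the price of the Jacobian/transpose bookkeeping; the paper's route avoids explicit Jacobians of $\xi$ and $\tau$ by keeping the total derivatives packaged, and handles both implications in a single chain of identities. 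Incidentally, your $\partial_{\dot q}f$ computation is essentially the one the paper performs later in Proposition~\ref{independenceOfVelocity}.
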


\begin{proof}
Let us establish some formulas first. The function~$\Lambda$ appearing in the normal form of Lagrange equation~\eqref{normalformLagrange} can be made explicit this way:
\begin{equation}
  \Lambda\equiv g^{-1}\bigl(\partial_{q}L-\partial^2_{\dot q,t}L-
  \partial^2_{\dot q,q}L\;\dot q\bigr).
\end{equation}
For any smooth $q(t)$, not necessarily a Lagrangian motion, the following relations holds:
\begin{gather}
  \partial_qL-\frac{d}{dt}\partial_{\dot q}L=g(\Lambda-\ddot q),
  \label{differenceOfLagrangeSides}\\
  \dot L=\partial_tL+\partial_qL\cdot\dot q
  +\partial_{\dot q}L\cdot\ddot q\label{lDot}
\end{gather}
Since $N$ is a first integral, again for any smooth $q(t)$ we have
\begin{equation}\label{nDot}\begin{split}
  \dot N={}&\partial_tN+\partial_qN\cdot\dot q+
  \partial_{\dot q}N\cdot\ddot q=\\
  ={}&\underbrace{\partial_tN+\partial_qN\cdot\dot q+
  \partial_{\dot q}N\cdot \Lambda}_{=0}+
  \partial_{\dot q}N\cdot(\ddot q-\Lambda)=\\
  ={}&\partial_{\dot q}N\cdot(\ddot q-\Lambda).
  \end{split}
\end{equation}

A solution of the strong form with $N$ as associated first integral must in particular satisfy relation~\eqref{familyFormula}. Suppose that the triple $(\tau ,\xi,f)$ satisfies~\eqref{familyFormula} and let us impose that it solves the strong version of Killing equation. The total time derivative of~$f$ along a generic smooth $q(t)$ is:
\begin{align}
  \dot f={}&
  \frac{d}{dt}\bigl(\tau L+N+\partial_{\dot q}L\cdot(\xi-\tau \dot q)
  \bigr)=\\
  ={}&\dot \tau L+\tau \dot L+\dot N+
  \Bigl(\frac{d}{dt}\partial_{\dot q}L\Bigr)\cdot(\xi-\tau \dot q)+
  \partial_{\dot q}L\cdot
  (\dot\xi-\dot \tau \dot q-\tau \ddot q).
\end{align}
Equating this with the left-hand side of the strong Killing-like equation we get
\begin{multline}
  \tau\partial_t L 
  +\partial_q L\cdot \xi
  +\partial_{\dot q}L\cdot
  \bigl(\dot \xi-\dot q\dot\tau \bigr)
  +L\tau =\\
  =\dot \tau L+\tau \dot L+\dot N+
  \Bigl(\frac{d}{dt}\partial_{\dot q}L\Bigr)\cdot(\xi-\tau \dot q)+
  \partial_{\dot q}L\cdot
  (\dot\xi-\dot \tau \dot q-\tau \ddot q)
\end{multline}
which simplifies immediately to
\begin{equation}
  \tau\partial_t L 
  +\partial_q L\cdot \xi
  =\tau \dot L+\dot N+
  \Bigl(\frac{d}{dt}\partial_{\dot q}L\Bigr)\cdot(\xi-\tau \dot q)-
  \tau \partial_{\dot q}L\cdot
  \ddot q.
\end{equation}
Using~\eqref{lDot} to replace~$\dot L$ it becomes
\begin{equation}
  \tau\partial_t L 
  +\partial_q L\cdot \xi
  =(\partial_tL+\partial_qL\cdot\dot q)\tau +\dot N+
  \Bigl(\frac{d}{dt}\partial_{\dot q}L\Bigr)\cdot(\xi-\tau \dot q),
\end{equation}
which further simplifies to
\begin{equation}
  \partial_q L\cdot \xi
  =\tau \partial_qL\cdot\dot q+\dot N+
  \Bigl(\frac{d}{dt}\partial_{\dot q}L\Bigr)\cdot(\xi-\tau \dot q),
\end{equation}
which can be rearranged to
\begin{equation}\label{productNdot}
  \Bigl(\partial_q L-\frac{d}{dt}\partial_{\dot q}L\Bigr)
  \cdot(\xi-\tau \dot q)=
  \dot N.
\end{equation}
Using~\eqref{differenceOfLagrangeSides} and~\eqref{nDot}, formula~\eqref{productNdot} becomes
\begin{equation}
  \bigl(g(g-\ddot q)\bigr)\cdot
  (\xi-\tau \dot q)=\partial_{\dot q}N(\ddot q-\Lambda)=
  -\partial_{\dot q}N\cdot(\Lambda-\ddot q).
\end{equation}
Since the hessian matrix~$g$ is symmetric, this becomes
\begin{equation}
  \bigl(g(\xi-\tau \dot q)\bigr)\cdot
  (\Lambda-\ddot q)=-\partial_{\dot q}N\cdot(\Lambda-\ddot q).
\end{equation} 
Finally, since $\ddot q$ is arbitrary, we conclude that
\begin{equation}
  g(\xi-\tau \dot q)=-\partial_{\dot q}N,
\end{equation}
which is equivalent to~\eqref{strongSolutionCondition}. Equation~\eqref{strongSolutionGauge} is simply a consequence of~\eqref{strongSolutionCondition} and~\eqref{familyFormula}.
\end{proof}

A direct proof of the ``if'' part of Theorem~\ref{familyOfSolutionsWithoutLagrange} can be found in a paper by Boccaletti and Pucacco~\cite[Sec.~2]{BoccalettiPucacco}.

If we know a solution to the Killing-type equation, either on-flow or strong, we can easily generate infinitely many others, parameterized by an arbitrary function:

\begin{Corollario}[Multiplicity for both on-flow and strong equation]\label{multiplicityBoth}
Let $L(t,q,\dot q)$ be a Lagrangian function. Suppose that the triple $\bigl(\tau (t,q,\dot q), \xi(t,q,\dot q),\allowbreak f(t,q,\dot q))$ satisfies the Killing-type equation~\eqref{Killing-type} in either the strong or the on-flow version. Take an arbitrary smooth function $h(t,q,\dot q)$. Then also the following triple
\begin{equation}\label{equivalenttriples}
  \tilde \tau =\tau +\frac{h-f}{L},\qquad
  \tilde\xi=\xi+\dot q\,\frac{h-f}{L},\qquad
  \tilde f=h 
\end{equation}
satisfies the Killing-type equation of the same form. The corresponding first integral \eqref{firstintegral} is the same.
\end{Corollario}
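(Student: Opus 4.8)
The plan is to reduce everything to two algebraic invariances of the proposed transformation, and then to invoke the two structure theorems already proved. First I would observe that the map $(\tau,\xi,f)\mapsto(\tilde\tau,\tilde\xi,\tilde f)$ defined by \eqref{equivalenttriples} leaves two combinations unchanged: the vector $\xi-\tau\dot q$ and the scalar $f-\tau L$. Both are immediate one-line computations from the definitions; the terms $\dot q\,(h-f)/L$ cancel in $\tilde\xi-\tilde\tau\dot q$, and the terms $h$ cancel in $\tilde f-\tilde\tau L$. Since the associated first integral \eqref{firstintegral} can be written as $N=(f-\tau L)-\partial_{\dot q}L\cdot(\xi-\tau\dot q)$, which involves the triple only through these two invariants, it follows at once that the new triple carries the same first integral, $\tilde N=N$. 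This settles the last sentence of the statement.

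It then remains to check that $(\tilde\tau,\tilde\xi,\tilde f)$ solves the Killing-type equation in whichever form the original triple did. For the on-flow case this is immediate: Theorem~\ref{familyOfSolutionsWithLagrange} says that a triple is an on-flow solution associated with $N$ precisely when \eqref{familyFormula} holds, and \eqref{familyFormula} is just the identity $N=(f-\tau L)-\partial_{\dot q}L\cdot(\xi-\tau\dot q)$ rearranged. Since both sides are built only from the preserved invariants and $\tilde N=N$, the new triple satisfies \eqref{familyFormula} with the same $N$, hence is again an on-flow solution.

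For the strong case I would similarly appeal to Theorem~\ref{familyOfSolutionsWithoutLagrange}, after rewriting its two conditions in terms of the invariants. Condition \eqref{strongSolutionCondition} reads $g(\xi-\tau\dot q)=-\partial_{\dot q}N$, which depends only on $\xi-\tau\dot q$ and on $N$ through $\partial_{\dot q}N$; condition \eqref{strongSolutionGauge} rearranges to $f-\tau L=N-\partial_{\dot q}L\cdot g^{-1}\partial_{\dot q}N$, again expressed purely through $f-\tau L$ and $N$. Because $\xi-\tau\dot q$, $f-\tau L$ and $N$ are all unchanged under \eqref{equivalenttriples}, the two conditions continue to hold for $(\tilde\tau,\tilde\xi,\tilde f)$, so it is again a strong solution.

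I do not expect any serious obstacle: the whole argument hinges on the single observation that $\xi-\tau\dot q$ and $f-\tau L$ are invariant, after which both theorems apply verbatim. The only point needing care is the implicit standing assumption $L\ne0$, which is what makes the division in \eqref{equivalenttriples} legitimate; as in the earlier corollaries one can replace $L$ by $L+c$ near its zeros if necessary. A purely computational alternative---substituting \eqref{equivalenttriples} directly into \eqref{Killing-type} and cancelling---is possible but messier, and the invariance argument has the advantage of making transparent why the first integral is preserved.
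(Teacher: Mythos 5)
Your argument is correct and is essentially the paper's own proof: the paper also verifies the corollary by checking that the characterizing equations \eqref{familyFormula}, \eqref{strongSolutionCondition} and \eqref{strongSolutionGauge} of Theorems~\ref{familyOfSolutionsWithLagrange} and~\ref{familyOfSolutionsWithoutLagrange} are preserved under the substitution \eqref{equivalenttriples}. Your explicit identification of the invariant combinations $\xi-\tau\dot q$ and $f-\tau L$ merely makes the paper's ``simple replacement'' transparent.
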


\begin{proof}
If we assume that any of the equations~\eqref{familyFormula}, \eqref{strongSolutionGauge} and~\eqref{strongSolutionCondition} holds for the triple $(\tau ,\xi,f)$, a simple replacement shows that the equation holds also for $(\tilde \tau ,\tilde\xi,\tilde f)$.
\end{proof}

Within the family of solutions given by Theorem~\ref{multiplicityBoth} there is always one with trivial (i.e., zero) time change and another one with trivial boundary term. This simple fact was already established in a more general setting (including, for example, nonlocal constants of motion) and different notations by the authors~\cite[Theorem~10]{GorniZampieri}.

\begin{Corollario}[Trivializing either time-change or gauge]\label{trivializationCorollary}
Let $L(t,q,\dot q)$ be a Lagrangian function. Suppose that the triple $\bigl(\tau (t,q,\dot q), \xi(t,q,\dot q),\allowbreak f(t,q,\dot q))$ satisfies the Killing-type equation~\eqref{Killing-type} in either the strong or the on-flow form. Then also the following two triples are solutions:
\begin{equation}\label{trivializationFormulas}
  (0,\;\xi-\dot q\tau ,\;f-L\tau ),\qquad 
  \Bigl(\tau -\frac{f}{L},\;\xi-\dot q\,\frac{f}{L},\;0\Bigr).
\end{equation}
The corresponding first integrals are the same.
\end{Corollario}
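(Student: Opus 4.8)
The plan is to recognize both triples in~\eqref{trivializationFormulas} as special instances of the one-parameter-function family produced by Corollary~\ref{multiplicityBoth}, so that no fresh manipulation of the Killing-type equation is needed. Recall that that corollary takes an arbitrary smooth $h(t,q,\dot q)$ and manufactures the new solution
\begin{equation*}
  \tilde\tau=\tau+\frac{h-f}{L},\qquad
  \tilde\xi=\xi+\dot q\,\frac{h-f}{L},\qquad
  \tilde f=h,
\end{equation*}
valid for the strong and the on-flow forms alike, and with the \emph{same} associated first integral~$N$. The entire content of the present corollary is therefore obtained by selecting two convenient values of the free function~$h$.

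First I would handle the trivial-gauge triple: choosing $h\equiv0$ forces $\tilde f=0$, and the formulas above collapse to $\tilde\tau=\tau-f/L$ and $\tilde\xi=\xi-\dot q\,f/L$, which is exactly the second triple in~\eqref{trivializationFormulas}. Next I would handle the trivial-time-change triple: to annihilate $\tilde\tau$ I would solve $\tau+(h-f)/L=0$ for~$h$, obtaining $h=f-L\tau$. Substituting this value back yields $\tilde f=f-L\tau$ and $\tilde\xi=\xi+\dot q\,(f-L\tau-f)/L=\xi-\dot q\,\tau$, which is precisely the first triple. Because Corollary~\ref{multiplicityBoth} guarantees that each such application preserves the first integral~\eqref{firstintegral}, the closing sentence of the statement follows immediately, with $N$ unchanged in both cases.

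The argument carries essentially no obstacle beyond bookkeeping: the only point demanding care is the implicit hypothesis $L\ne0$, needed so that the divisions by~$L$ appearing both in~\eqref{trivializationFormulas} and in Corollary~\ref{multiplicityBoth} are meaningful. This is the same standing assumption already in force for the division by~$L$ used in Corollary~\ref{existenceOnFlow}; away from the zero set of~$L$ the construction is unproblematic, and I would flag this restriction parenthetically rather than belabor it.
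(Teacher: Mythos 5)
Your proof is correct and essentially identical to the paper's: the paper likewise obtains both triples by specializing Corollary~\ref{multiplicityBoth}, taking $h=f-L\tau$ for the first triple and the value of $h$ that kills the boundary term for the second (the paper writes $h=f$ there, apparently a slip for $h\equiv0$, which is the choice you correctly identify). Your parenthetical caveat about the standing restriction $L\ne0$ is sensible but does not alter the argument.
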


\begin{proof}
Simply take either $h=f-L\tau $ or $h=f$ in Corollary~\ref{multiplicityBoth}.
\end{proof}

\section{Strong solutions independent of $\dot q$}
\label{strongIndependentOfDotQ}

Given $\xi$ and~$\tau $ that do not depend on~$\dot q$ there is a simple necessary condition for them to be part of a solution triple $(\tau ,\xi,f)$ of the strong form of Killing-like equation, regardless of~$N$.

\begin{Proposizione}\label{independenceOfVelocity}
Suppose that $(\tau ,\xi,f)$ solves the strong form of Killing equation, and that $\xi(t,q)$ and~$\tau (t,q)$ do not depend on~$\dot q$. Then $f$ does not depend on~$\dot q$ either. Moreover, the left-hand side of the Killing-like equation
\begin{equation}\label{leftHandSide}
  \tau\partial_t L 
  +\partial_q L\cdot \xi
  +\partial_{\dot q}L\cdot
  \bigl(\dot \xi-\dot q\dot\tau \bigr)
  +L\dot \tau 
\end{equation} 
after replacing with the given $L(t,q,\dot q),\xi(t,q),\tau (t,q)$, depends linearly on~$\dot q$.
\end{Proposizione}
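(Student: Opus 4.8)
The plan is to exploit the defining feature of the \emph{strong} form — that the equation must hold for all values of $\ddot q$, treated as an independent variable — together with the observation that the left-hand side carries no $\ddot q$ once $\tau$ and $\xi$ are independent of $\dot q$.

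First I would compute the total time derivatives entering \eqref{leftHandSide}. Since $\tau=\tau(t,q)$ and $\xi=\xi(t,q)$ carry no dependence on $\dot q$, the chain rule gives $\dot\tau=\partial_t\tau+\partial_q\tau\cdot\dot q$ and $\dot\xi=\partial_t\xi+(\partial_q\xi)\dot q$, both free of $\ddot q$ (had $\tau$ or $\xi$ depended on $\dot q$, the chain rule would have produced terms $\partial_{\dot q}\tau\cdot\ddot q$ and $(\partial_{\dot q}\xi)\ddot q$, which are now absent). Substituting these, I observe that every ingredient of \eqref{leftHandSide} — namely $L,\partial_tL,\partial_qL,\partial_{\dot q}L$ as functions of $(t,q,\dot q)$, together with $\tau,\xi,\dot\tau,\dot\xi$ — is a function of $(t,q,\dot q)$ alone. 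Hence the whole expression \eqref{leftHandSide} contains no $\ddot q$.

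Next I would write out the right-hand side of \eqref{Killing-type}. For a general $f$ the chain rule gives $\dot f=\partial_tf+\partial_qf\cdot\dot q+\partial_{\dot q}f\cdot\ddot q$, and the strong version of the equation asserts that \eqref{leftHandSide} equals this $\dot f$ identically in $(t,q,\dot q,\ddot q)$. Since the left-hand side is $\ddot q$-free while the right-hand side is affine in $\ddot q$ with coefficient $\partial_{\dot q}f$, matching the $\ddot q$-terms forces $\partial_{\dot q}f=0$; thus $f=f(t,q)$, which is the first assertion. Notably, this step nowhere invokes the first integral $N$, consistent with the proposition's ``regardless of~$N$.''

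Finally, with $f$ independent of $\dot q$ the surviving identity reads \eqref{leftHandSide}$\,=\dot f=\partial_tf+\partial_qf\cdot\dot q$, whose right-hand side is a polynomial of degree at most one in $\dot q$ (the $\dot q$-independent part being $\partial_tf$). Therefore the left-hand side, after substitution of the given $L,\xi,\tau$, depends linearly on $\dot q$ in this affine sense, which is the second assertion. The one genuinely substantive step is recognizing that the ``hold for all $\ddot q$'' clause of the strong form is exactly what annihilates the $\dot q$-dependence of $f$; the remainder is bookkeeping with the chain rule.
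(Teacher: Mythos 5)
Your proof is correct, but it takes a genuinely different and more elementary route than the paper. The paper derives $\partial_{\dot q}f\equiv0$ by differentiating the identity $f=\tau L+N+\partial_{\dot q}L\cdot(\xi-\tau\dot q)$ with respect to $\dot q$ and invoking the characterization $\partial_{\dot q}N=-g(\xi-\tau\dot q)$ from Theorem~\ref{familyOfSolutionsWithoutLagrange}; it therefore routes the argument through the associated first integral $N$ and the structure theorem for strong solutions (hence, implicitly, through the regularity condition~\eqref{Legendre}). You instead exploit the definition of the strong form directly: with $\tau(t,q)$ and $\xi(t,q)$ the left-hand side~\eqref{leftHandSide} carries no $\ddot q$, while $\dot f=\partial_tf+\partial_qf\cdot\dot q+\partial_{\dot q}f\cdot\ddot q$ is affine in $\ddot q$ with coefficient $\partial_{\dot q}f$, so requiring the identity to hold for all $\ddot q$ forces $\partial_{\dot q}f=0$; the linear (affine) dependence on $\dot q$ then falls out of $\dot f=\partial_tf+\partial_qf\cdot\dot q$ exactly as in the paper. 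Your argument is self-contained, needs neither $N$ nor the invertibility of $g$, and is essentially the $\ddot q$-coefficient-matching remark the paper itself makes in the Introduction (``the equation depends linearly on $\ddot q$\dots''); what the paper's version buys in exchange is coherence with its general programme of deducing everything from the explicit solution formulas~\eqref{familyFormula} and~\eqref{strongSolutionCondition}. Both proofs use ``linear'' in the affine sense (degree at most one in $\dot q$), which matches the intended reading of the statement.
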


\begin{proof}
Starting from formula~\eqref{familyFormula}, which holds in the strong case too,
\begin{equation}
  f=\tau L+N+\partial_{\dot q}L\cdot(\xi-\tau \dot q)
\end{equation}
and taking the gradient with respect to~$\dot q$ we get
\begin{equation}\begin{split}
  \partial_{\dot q}f={}&\partial_{\dot q}N+
  \partial_{\dot q}\bigl(
  \tau L+\partial_{\dot q}L\cdot (\xi -\tau \dot q)\bigr)=\\
  ={}&
  -g(\xi -\tau \dot q)+
  \partial_{\dot q}\bigl(
  \tau L+\partial_{\dot q}L\cdot (\xi -\tau \dot q)\bigr),
  \end{split}
\end{equation}
where we have used the replacement $\partial_{\dot q}N=-g(\xi -\tau \dot q)$, which is a rearrangement of~\eqref{strongSolutionCondition}.
Using now the assumption that $\xi,\tau $ do not depend on~$\dot q$ we can carry on the calculation
\begin{equation}\begin{split}
  \partial_{\dot q}f={}&
  -g(\xi -\tau \dot q)+
  \tau \partial_{\dot q}L+
  g (\xi -\tau \dot q)+
  \partial_{\dot q}L(-\tau )
  \equiv0.
  \end{split}
\end{equation}
We deduce that $f$ does not depend on~$\dot q$ either. Hence $\dot f$ is linear in~$\dot q$. We conclude that expression~\eqref{leftHandSide}, which is identically equal to~$\dot f$, must be linear in~$\dot q$ too.
\end{proof}

Only some first integrals $N$ can be deduced from a triple $(\tau (t,q),\xi(t,q),\allowbreak f(t,q))$ independent of~$\dot q$.

\begin{Proposizione}\label{conditionOnTXiForIndependenceOfVelocity}
A first integral $N(t,q,\dot q)$ can be deduced from a triple $(\tau ,\xi, f)$ that does not depend on~$\dot q$ if and only if $g^{-1} \partial_{ \dot q}N=a(t,q)+b(t,q)\dot q$, where $a(t,q)$ is vector-valued and $b(t,q)$ is scalar-valued.
\end{Proposizione}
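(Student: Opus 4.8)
The plan is to read everything off the structural characterization already supplied by Theorem~\ref{familyOfSolutionsWithoutLagrange}. That theorem tells us that a triple solves the strong equation with $N$ as associated first integral precisely when
\begin{equation}
  \xi=\tau \dot q-g^{-1}\partial_{\dot q}N,\qquad
  f=\tau L+N-\partial_{\dot q}L\cdot g^{-1}\partial_{\dot q}N.
\end{equation}
The whole proposition is really just a reinterpretation of the first of these two identities, rearranged as $g^{-1}\partial_{\dot q}N=\tau \dot q-\xi$.

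For the ``only if'' direction I would start from a solution triple $(\tau ,\xi,f)$ in which $\tau (t,q)$ and $\xi(t,q)$ do not depend on~$\dot q$. Substituting into the rearranged identity gives $g^{-1}\partial_{\dot q}N=\tau (t,q)\dot q-\xi(t,q)$, and setting $b:=\tau $ (scalar-valued) and $a:=-\xi$ (vector-valued) exhibits exactly the claimed affine form $g^{-1}\partial_{\dot q}N=a(t,q)+b(t,q)\dot q$. This half is immediate once the first identity of Theorem~\ref{familyOfSolutionsWithoutLagrange} is invoked.

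For the ``if'' direction, given the hypothesis $g^{-1}\partial_{\dot q}N=a(t,q)+b(t,q)\dot q$, I would reverse the construction: define $\tau :=b(t,q)$ and $\xi:=-a(t,q)$, both manifestly independent of~$\dot q$, so that $\tau \dot q-\xi=a+b\dot q=g^{-1}\partial_{\dot q}N$ and condition~\eqref{strongSolutionCondition} holds. Completing the triple by $f:=\tau L+N-\partial_{\dot q}L\cdot g^{-1}\partial_{\dot q}N$ as in~\eqref{strongSolutionGauge}, Theorem~\ref{familyOfSolutionsWithoutLagrange} guarantees that $(\tau ,\xi,f)$ is a strong solution whose associated first integral is exactly~$N$.

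The one point that is not purely formal, and the step I expect to matter most, is verifying that the resulting triple really is $\dot q$-independent in all three slots: the construction forces $\tau $ and $\xi$ to be free of~$\dot q$, but $f$ is defined through $L$ and $\partial_{\dot q}L$ and so a priori could depend on~$\dot q$. Here I would invoke Proposition~\ref{independenceOfVelocity}, which asserts that as soon as $\tau $ and $\xi$ are $\dot q$-independent in a strong solution, $f$ is automatically $\dot q$-independent too. With that, the triple is admissible and the equivalence is established.
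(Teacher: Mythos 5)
Your proposal is correct and follows essentially the same route as the paper's own proof: both directions are read off from formula~\eqref{strongSolutionCondition} of Theorem~\ref{familyOfSolutionsWithoutLagrange} with the identifications $a=-\xi$, $b=\tau$, and the $\dot q$-independence of $f$ is obtained from Proposition~\ref{independenceOfVelocity}. You correctly flagged the only non-formal point (that $f$ must also be checked to be $\dot q$-free), which is exactly the step the paper also delegates to that proposition.
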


\begin{proof}
If $N$ can be deduced from $\tau (t,q),\xi(t,q),f(t,q)$, then from Theorem~\ref{familyOfSolutionsWithoutLagrange}, formula~\eqref{strongSolutionCondition}, $g^{-1}\partial_{\dot q}N= -\xi(t,q)+\tau (t,q)\dot q$. Conversely, if $g^{-1} \partial_{ \dot q}N=a(t,q)+b(t,q)\dot q$, we can choose $\xi=-a$, $\tau =b$ and $f$ given by equation~\eqref{strongSolutionGauge}, so that the triple $(\tau ,\xi, f)$ is a solution of Killing-like equation in the strong sense (Theorem~\ref{familyOfSolutionsWithoutLagrange}). Finally, $f$~does not depend on~$\dot q$ because of Proposition~\ref{independenceOfVelocity}.
\end{proof}

\section{Examples}\label{examplesSection}

\subsection{The free particle}
\label{FreeParticle}

In one of his papers~\cite{Leach2}, Leach argues that Lie point symmetries that are usually called ``nonnoetherian'' are indeed fully Noetherian, provided that we switch from the obvious Lagrangian to some other Lagrangian which retains the same equations of motion. The point is illustrated with the example of the free particle in one dimension: the equation of motion is $\ddot q=0$, whose Lie symmetries $\xi\partial_q +\tau \partial_t$ are an 8-dimensional space (Table~\ref{simmetrie}). If we examine these symmetries in the Noetherian sense together with the ``natural'' Lagrangian~$L=\dot q^2/2$, we see that only five of them can be completed to a Noetherian triple $(\tau ,\xi,f)$. Let us see what we can say about the three remaining ``nonnoetherian'' symmetries from the point of view of the strong and on-flow solutions, without resorting to a different Lagrangian.

\begin{table}
\begin{equation*}
\begin{array}{lccc}
\text{Lie symmetry}&\text{Lie 1st integral}&
f&
\text{Noether 1st int.}\\\hline
\mathstrut\Gamma_1=\partial_q & \dot q & 0 & -\dot q\\
\Gamma_2=t\partial_q & t\dot q-q & q & q-t\dot q\\
\Gamma_3=\partial_t & \dot q & 0 & \dot q^2/2\\
\Gamma_4=2t\partial_t+q\partial_q & (t\dot q-q)\dot q & 0 &
   (t\dot q-q)\dot q\\
\Gamma_5=t^2\partial_t+tq\partial_q & t\dot q-q & q^2/2 &
   (q-t\dot q)^2/2\\[3pt]
\Gamma_6=q\partial_q & (t\dot q-q)/\dot q\\
\Gamma_7=q\partial_t & (t\dot q-q)/\dot q\\
\Gamma_8=qt\partial_t+q^2\partial_q & (t\dot q-q)/\dot q
\end{array}
\end{equation*}
\begin{center}
\caption{Lie and Noether symmetries of the free particle}
\label{simmetrie}
\end{center}
\end{table}

For the Lie symmetries $\Gamma_6,\Gamma_7,\Gamma_8$ the Killing-like equation becomes respectively
\begin{equation}
  \dot q^2=\dot f_6,\qquad
  -\frac{\dot q^3}{2}=\dot f_7,\qquad
  \frac{3q\dot q^2-t\dot q^3}{2}=\dot f_8.
\end{equation}
These equations have no solution in~$f$ in the strong sense, because the left-hand sides are not linear in~$\dot q$ (Proposition~\ref{conditionOnTXiForIndependenceOfVelocity}).

As for the on-flow version of the Killing-like equation, given any arbitrary couple $(\tau ,\xi)$ and a first integral~$N$, formula~\eqref{familyFormula} of Theorem~\ref{familyOfSolutionsWithLagrange} immediately gives a boundary term~$f$ that completes to a solution triple. Specifically:
\begin{gather}
  \text{for }\Gamma_6\qquad
  f_6=q\dot q+N
  \\
  \text{for }\Gamma_7\qquad
  f_7=-\frac{q\dot q^2}{2}+N
  \\
  \text{for }\Gamma_8\qquad
  f_8=\frac{1}{2}(2q-t\dot q)q\dot q+N.
\end{gather}
Of course, these boundary terms depend on~$\dot q$. With solutions in the on-flow solutions we can recover all first integral of the system, i.e., all function of the form $k(\dot q,q-t\dot q)$. Using Proposition~\ref{conditionOnTXiForIndependenceOfVelocity}, since $g^{-1}=1$, we can say that with solutions $(\tau (t,q),\xi(t,q),f(t,q))$ in the strong sense we cannot obtain first integrals that are not quadratic in~$\dot q$, for example $(q-t\dot q)^3$.

\subsection{Superintegrable systems related to isochrony}
\label{superintegrableSubSection}

The authors are familiar with with the following system of two scalar differential equations
\begin{equation}\label{isoch}
  \ddot x=-G(x),\quad
  \ddot y=-G'(x)y,
\end{equation}
which are the Lagrange equations of the Lagrangian
\begin{equation}\label{LagrangianoIsoch}
  L(t,q,\dot q)=
  \dot x\,\dot y-G(x)y,
  \qquad\text{where }q=\binom{x}{y},\ 
  \dot q=\binom{\dot x}{\dot y}
\end{equation}
This system is rich with first integrals. One is $N_1=\dot x\dot y+G(x)y$. Since $g^{-1}\partial_{\dot q}N_1\allowbreak=\dot q$, following Proposition~\ref{conditionOnTXiForIndependenceOfVelocity} we can deduce $N_1$ from the following triple independent of~$\dot x,\dot y$:
\begin{equation}
  \tau =1,\quad
  \xi=0,\quad
  f=0,
\end{equation}
which is a solution in the strong sense.

Another first integral is $N_2=\dot x^2/2+\int G(x)dx$. Since $g^{-1}\partial_{\dot q}N_2=(\begin{smallmatrix}0&0\\ -1&0 \end{smallmatrix})\dot q$ and because of Proposition~\ref{conditionOnTXiForIndependenceOfVelocity}, to deduce $N_2$ we must accept dependence on~$\dot q$. According to Theorem~\ref{familyOfSolutionsWithoutLagrange}, all solutions in the strong sense are given by an arbitrary~$\tau $ and
\begin{gather}
  \xi=\tau \dot q-g^{-1}\partial_{\dot q}N_2=
  \binom{\tau \dot x}{\tau \dot y-\dot x},\\
  f=\tau L+N_2-\partial_{\dot q}g^{-1}\partial_{\dot q}N_2=
  \tau \bigl(\dot x\dot y-G(x)y\bigr)-\frac{1}{2}\dot x^2+\int G(x)dx
\end{gather}

The special feature of the system~\eqref{isoch} is that for some classes of function~$G$ the system has a third, independent, first integral. One way to detect some of these superintegrable system is by making a plausible ansatz on the triple $(\tau ,\xi,f)$ and solving the Killing-like equation for $G$ as an additional unknown function. We think it is preferable to use the on-flow version of the equation, simply because it has so many more solution, heightening the chances that the ansatz may catch one.

Our starting ansatz is
\begin{equation}\label{ansatzH}
  \tau \equiv0,\qquad
  \xi=\binom{h(x,\dot x)}{0}.
\end{equation}
In keeping with the on-flow version, the first and second total time derivatives of~$h$ will take the Lagrange equations into account:
\begin{equation}
  \dot h=\dot x\partial_x h+\ddot x\partial_{\dot x}h=
  \dot x\partial_x h-G(x)\partial_{\dot x}h,\qquad
  \ddot h=\dot x\partial_x \dot h-G(x)\partial_{\dot x}\dot h
\end{equation}
The Killing-like equation becomes
\begin{equation}
  -yG'(x)h+\dot y\dot h=\dot f.
\end{equation}
With the further ansatz that
\begin{equation}\label{ansatzF}
  f=y\dot h
\end{equation}
the equation is
\begin{equation}
  -yG'(x)h+\dot y\dot h=\dot y\dot h+y\ddot h
\end{equation}
which simplifies to
\begin{equation}\label{killingWithH}
  -G'(x)h=\ddot h.
\end{equation}
We can make a third ansatz by setting $h$ to be a polynomial in~$\dot x$ of the form $h=\alpha(x)+\beta(x)\dot x^2$. Replacing into~\eqref{killingWithH} we obtain a polynomial of degree~4 in~$\dot x$ equated to~0:
\begin{multline}\label{polyInXdot}
  \beta ''(x)\dot x^4 +
  \bigl(\alpha''(x)-\beta(x)G'(x)-5 G(x)\beta'(x)\bigr)\dot x^2+{}\\
  +2 G(x)^2\beta(x)-G(x)\alpha '(x)
  +\alpha (x) G'(x)=0.
\end{multline}
The coefficient of~$\dot x^4$ is $\beta''(x)$, which must be~0. Let us simply take $\beta(x)\equiv x$. Equating the coefficient of~$\dot x^0$ in~\eqref{polyInXdot} to~0 we get
\begin{equation}
  \alpha(x) G'(x)-G(x)\alpha '(x)+2 x G(x)^2=0,
\end{equation}
which can be solved for~$\alpha$ as $\alpha(x)=(c+x^2)G(x)$. Replacing into the coefficient of~$\dot x^2$ we get the second order linear equation in~$G$
\begin{equation}\label{equationInG}
  (c+x^2) G''(x)+3 x G'(x)-3 G(x)=0,
\end{equation}
whose linear space of real solutions around $x=0$ is generated by $G(x)=x$ and by
\begin{equation}
  \frac{1}{x^3}\quad\text{if }c=0,\qquad
  \frac{c+2x^2}{\sqrt{c+x^2}}\quad\text{if }c>0,
  \qquad
  \frac{-c-2x^2}{\sqrt{-c-x^2}}\quad\text{if }c<0,
\end{equation}
If we take any $G$ in this space, and set $h=(c+x^2)G(x)+x\dot x^2$, the triple given by equations~\eqref{ansatzH} and~\eqref{ansatzF} is an on-flow solution to Killing-like equation, and the associated first integral is
\begin{equation}\begin{split}
  N_3={}&f-L\tau -\partial_{\dot q}L\cdot\bigl(\xi-\tau \dot q\bigr)=\\
  ={}&y\dot h-\partial_{\dot q}L\cdot\xi=\\
  ={}&(c+x^2)G'(x)\dot xy-(c+x^2) G(x)\dot y-x
   \dot x^2 \dot y+\dot x^3 y.
  \end{split}
\end{equation}
It can be verified that the three first integrals $N_1,N_2,N_3$ are functionally independent. The triple $(\tau ,\xi,f)$ that we have found is not a solution in the strong sense, since $\xi-(\tau \dot q-g^{-1}\partial_{\dot q}N_3)=(0, (c+x^2) G'(x)y+(3 \dot{x} y-2 x\dot{y})\dot{x} )$ does not vanish identically.

Now that we know the expression of the first integral~$N_3$ we can construct the solution triples $(\tau ,\Xi,f)$ of the Killing-like equation in the strong sense:
\begin{gather}
  \tau =T,\qquad
  \Xi=\binom{(c+x^2) G(x)+
  (T +x \dot{x})\dot{x}}{
  -y(c+x^2) G'(x)-3 \dot{x}^2 y+2 x \dot{x} \dot{y}
  +T \dot{y}},\\
  f=(T +2 x \dot{x})\dot{x} \dot{y} 
  -2 \dot{x}^3 y-G(x)T y.
\end{gather}

Summing up, we have used the on-flow version of the Killing-like equation to detect a class of superintegrable systems:

\begin{Proposizione}
The Lagrangian system given by equations~\eqref{isoch} and~\eqref{LagrangianoIsoch} is superintegrable whenever the function~$G$ satisfies equation~\eqref{equationInG}.
\end{Proposizione}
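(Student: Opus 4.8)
\emph{Setup.} The plan is to read ``superintegrable'' in its standard sense: the system~\eqref{isoch} has $n=2$ degrees of freedom, so its autonomous phase space $(x,y,\dot x,\dot y)$ is four-dimensional, and (maximal) superintegrability amounts to exhibiting $2n-1=3$ functionally independent constants of motion. Three candidates are already on the table, namely $N_1=\dot x\dot y+G(x)y$, $N_2=\dot x^2/2+\int G(x)\,dx$, and the $N_3$ computed just above. The whole argument therefore reduces to two points: confirming that all three are first integrals, and checking that they are functionally independent.

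\emph{The three first integrals.} I would dispatch $N_1$ and $N_2$ by a one-line computation of their total time derivatives along~\eqref{isoch}: substituting $\ddot x=-G(x)$ and $\ddot y=-G'(x)y$ gives $\dot N_2=\dot x\ddot x+G(x)\dot x=0$ and $\dot N_1=\ddot x\dot y+\dot x\ddot y+G'(x)\dot x\,y+G(x)\dot y=0$. For $N_3$ I would avoid computing $\dot N_3$ directly and instead invoke Noether's theorem together with Theorem~\ref{familyOfSolutionsWithLagrange}. The triple built from the ansatz~\eqref{ansatzH}--\eqref{ansatzF} with $h=(c+x^2)G(x)+x\dot x^2$ was constructed precisely so that the on-flow Killing-type equation collapses to~\eqref{killingWithH} and then to the coefficient conditions culminating in~\eqref{equationInG}. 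Thus the hypothesis that $G$ solve~\eqref{equationInG} is exactly what turns this triple into a genuine on-flow solution, and Noether's theorem then guarantees that the associated expression~\eqref{firstintegral}, which is $N_3$, is conserved. This is the one and only place where the hypothesis enters.

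\emph{Functional independence --- the main step.} Since $N_1,N_2,N_3$ carry no explicit~$t$, I would work on the four-dimensional phase space and show that the Jacobian $\partial(N_1,N_2,N_3)/\partial(x,y,\dot x,\dot y)$ has rank~$3$ on a dense open set. The cleanest route is to select the three columns $(y,\dot x,\dot y)$ and exploit that $N_2$ depends on neither $y$ nor~$\dot y$, so its row in this minor is $(0,\dot x,0)$. Expanding the resulting $3\times3$ determinant along that row collapses it to $\dot x$ times the $2\times2$ determinant formed by the $(y,\dot y)$-derivatives of $N_1$ and~$N_3$:
\[
  \dot x\,\det\!\begin{pmatrix}
    G & \dot x\\
    (c+x^2)G'\dot x+\dot x^3 & -(c+x^2)G-x\dot x^2
  \end{pmatrix}
  =-(c+x^2)G(x)^2\,\dot x+O(\dot x^3).
\]
As a polynomial in~$\dot x$ its lowest-order coefficient is $-(c+x^2)G(x)^2$, which is not identically zero; hence the minor does not vanish identically, and the rank is~$3$ wherever $\dot x\neq0$, $G(x)\neq0$, and $c+x^2\neq0$. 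This establishes functional independence on a dense open set, and three independent first integrals in a four-dimensional phase space give superintegrability.

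\emph{Main obstacle.} The only genuine work is organizing the gradient of~$N_3$ and verifying that the chosen minor is not identically zero; everything else is bookkeeping. Picking the columns $(y,\dot x,\dot y)$ is what keeps this manageable, since it sidesteps the messy entries $\partial_xN_3$ and $\partial_{\dot x}N_3$ entirely and reduces the whole verification to inspecting a single coefficient in~$\dot x$.
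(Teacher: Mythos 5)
Your proposal is correct and follows essentially the same route as the paper: $N_3$ is obtained as the Noether integral of the on-flow triple built from the ansatz \eqref{ansatzH}--\eqref{ansatzF} with $h=(c+x^2)G(x)+x\dot x^2$, the hypothesis \eqref{equationInG} enters exactly where you say, and the paper merely asserts the functional independence of $N_1,N_2,N_3$ that you verify explicitly via the $(y,\dot x,\dot y)$ minor. The only quibble is your claim that the rank is $3$ \emph{wherever} $\dot x\neq0$, $G(x)\neq0$, $c+x^2\neq0$: the bracket $-(c+x^2)G^2-\bigl(xG+(c+x^2)G'\bigr)\dot x^2-\dot x^4$ can still have real roots in $\dot x$ (e.g.\ when $c<0$), but since the minor is not identically zero your actual conclusion --- independence on a dense open set, hence superintegrability --- stands.
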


This class of systems with the parameter $c>0$ overlaps with the one that was found by the second author~\cite[Sec.~5]{Zampieri} using a totally different line of reasoning. The on-flow method that we have illustrated here was pushed further (albeit with a different language) by the two authors~\cite[Sec.~13]{GorniZampieri}, using with the more general ansatz $h=\alpha(x)+\beta(x)\dot x^2+\gamma(x)\dot x^4$. We expect that a larger class of superintegrable systems can readily be found by increasing the degree of~$h$ with respect to~$\dot x$.

\subsection{The Laplace-Runge-Lenz vector for Kepler's problem}\label{LRLSubSection}

Consider the Lagrangian function and Lagrange equation of Kepler's problem in dimension~3
\begin{gather}\label{LKepler}
  L(t,\vec r, \vec v)=
  \frac{1}{2}  \lVert\vec v\rVert^2
  +\frac{\mu}{\lVert \vec r\rVert},\quad
  \vec r\in\R^3\setminus\{0\},
  \\
  \label{LagrangeKepler}
  \ddot{\vec r}=-\frac{\mu}{\lVert \vec r\rVert^3}\,\vec r\,.
\end{gather}
Here we depart from the $q,\dot q$ notation and use $\vec r,\vec v$ instead, as done in common introductory mechanics textbooks. The vector product~``$\times$'' for 3-dimensional vectors will allow more compact formulas than what we get in the otherwise equivalent 2-dimensional treatment we gave in an earlier paper~\cite{GorniZampieri}.

Besides energy and angular momentum, the Kepler system has the LRL vector first integral
\begin{equation}
  \vec A:= \vec v\times
  (\vec r\times  \vec v)-
  \frac{\mu}{\lVert \vec r\rVert}\,\vec r.
\end{equation}
Fix an arbitrary vector $\vec u\in\R^3$ and consider the scalar first integral
\begin{equation}
  N:=-\vec u\cdot \vec A.
\end{equation}
 If we check the condition of Proposition~\ref{conditionOnTXiForIndependenceOfVelocity} we see that $N$ cannot be obtained from a triple $(\tau,\xi,f)$ which is independent of~$\vec v$. Let us see what we can do with either on-flow or strong solutions involving~$\vec v$.

Theorem~\ref{familyOfSolutionsWithLagrange} gives us so many on-flow solutions that we may be choosy and aim for subjectively simple, elegant formulas. One that is simple enough is
\begin{equation}
  \tau _0=\frac{\vec u\cdot \vec v\times
  (\vec r\times \vec v)}{L},\qquad
  {\vec \xi}_0=\frac{\vec u\cdot \vec v\times(\vec r\times \vec v)}{L}
  \vec v,\qquad
  f_0=\frac{\mu}{\lVert\vec r\rVert}\vec r\cdot\vec u.
\end{equation}
Levy-Leblond~\cite{Leblond} proposed the following one, without explanation as to how he came up with the formula:
\begin{gather}
  \tau _L=0,\qquad
  {\vec\xi}_L=-\frac{1}{2}\partial_{\vec v}N=
  (\vec r\cdot \vec u)\vec v
  -\frac{1}{2}(\vec v\cdot \vec u)\vec r
  -\frac{1}{2}(\vec v\cdot \vec r)\vec u,\\
  f_L=\tau_L L-N+\partial_{\dot q}L\cdot(\xi_L-\tau_L\dot q)=
  \frac{\mu}{\lVert\vec r\lVert}\vec r\cdot\vec u=f_0,
\end{gather}

To find different solutions with trivial first order time variation $\tau \equiv 0$, we write the Killing-type equation~\eqref{Killing-type} within the current setting:
\begin{equation}\label{KillingWithZeroT}
  \partial_{\vec r} L\cdot \vec\xi
  +\partial_{\vec v}L\cdot
  \dot {\vec\xi}=
  \dot f,
\end{equation}
and we impose that the first order space variation $\vec \xi$ be such that
\begin{equation}\label{imposition}
 \partial_{\vec v}L\cdot \vec \xi=\vec u\cdot \vec v\times
  (\vec r\times  \vec v).
\end{equation}
Our favourite way to satisfy this condition is
\begin{equation}
  {\vec \xi}_Z=
  (\vec r\times  \vec v)\times\vec u=
  (\vec r\cdot \vec u)\vec v-(\vec v\cdot \vec u)\vec r.
\end{equation}
This choice is not the only possible: $\vec\xi_L$ satisfies the same condition:
\begin{equation}
  \partial_{\vec v}L\cdot\vec \xi_{L}
  =\vec v\cdot\vec \xi_{L}=
  \vec u\cdot \vec v\times
  (\vec r\times  \vec v)
  =\lVert\vec v\rVert^2\vec u\cdot\vec r
  -(\vec v\cdot \vec r)(\vec v\cdot \vec u).
\end{equation}
Using Lagrange equation \eqref{LagrangeKepler} we have  
\begin{equation}
  {\dot {\vec\xi}}_Z=
  \bigl(\partial_{\vec r}{\vec \xi}_Z\bigr) \vec v
  +\partial_{\vec v}{\vec \xi}_Z\,
  \Bigl(-\frac{\mu}{\lVert \vec r\rVert^3}\, \vec r\Bigr)
  =\vec 0.
\end{equation}
Using equation~\eqref{imposition}, the left-hand side of equation~\eqref{KillingWithZeroT} becomes
\begin{align*}
  \partial_{\vec r} L\cdot {\vec \xi}_Z
  +\partial_{\vec {}v}L\cdot
  {\dot {\vec \xi}}_Z={}&
  \partial_{\vec r} L\cdot {\vec \xi}_Z
  +\partial_{\vec v}L\cdot \vec 0=
  -\frac{\mu}{\lVert \vec r\rVert^3}\vec r\cdot
  (\vec r\times  \vec v)\times\vec u=\\
  ={}&-\frac{\mu}{\lVert \vec r\rVert^3}\vec r\times
  (\vec r\times  \vec v)\cdot\vec u=\\
  ={}&-
  \frac{\mu}{\lVert \vec r\rVert^3}\,\vec u\cdot
  \bigl(\vec r(\vec r\cdot \vec v)-
  \vec v\Vert \vec r\rVert^2\bigr)=
  \vec v\cdot\partial_{\vec r}
  \Bigl(\frac{\mu}{\lVert \vec r\rVert}\vec r\cdot\vec u\Bigr)=\\
  ={}&\frac{d}{dt}
  \Bigl(\frac{\mu}{\lVert \vec r\rVert}\vec r\cdot\vec u\Bigr)=
  \dot f_0.
\end{align*}
We can complete the solution triple as follows:
\begin{equation}\label{LRLgauge}
  \tau _Z\equiv0,\qquad
  {\vec \xi}_Z=
  (\vec r\times  \vec v)\times\vec u,\qquad
  f_Z=f_0=\frac{\mu}{\lVert \vec r\rVert}\,\vec r\cdot\vec u.
\end{equation}
The resulting first integral of formula~\eqref{firstintegral} is what we expected:
\begin{align*}
  f_Z-\partial_{\dot \vec v}L\cdot{\vec \xi}_Z={}&
  \frac{\mu}{\lVert \vec r\rVert}\vec r\cdot\vec u
  -\vec v\cdot (\vec r\times  \vec v)\times\vec u=\\
  ={}&-\Bigl(\vec v\times (\vec r\times  \vec v)
  -\frac{\mu}{\lVert \vec r\rVert}\vec r\Bigr)\cdot\vec u=
  N.
\end{align*}
The triple~\eqref{LRLgauge} belongs to the family of Theorem~\ref{familyOfSolutionsWithLagrange}, as can be checked by direct computation.

Corollary~\ref{multiplicityBoth}, formula~\eqref{equivalenttriples}, applied to the triple~\eqref{LRLgauge}, gives a whole family of solution triples, depending on an arbitrary function~$h(t,\vec r,\vec v)$:
\begin{equation}\label{familyForLRL}
  \tau =\frac{1}{L}\Bigl(h-\frac{\mu}{\lVert \vec r\rVert}
  \vec u\cdot\vec r\Bigr),\quad
  \vec \Xi= (\vec r\times  \vec v)\times\vec u
  +\frac{1}{L}\Bigl(h-\frac{\mu}{\lVert \vec r\rVert}
  \vec u\cdot\vec r\Bigr)\vec v,\quad
  f=h.
\end{equation}
As in Corollary~\ref{trivializationCorollary}, the choice $h\equiv0$ will trivialize the boundary term.

Using a computer algebra system the reader can directly check all these solutions to the Killing-type equation, independently of the theorems in Section~\ref{KillingSection}. For example here is some simple code written for Wolfram \emph{Mathematica} that implements the solution triple~\eqref{familyForLRL} and then checks that the on-flow Killing-type equation is satisfied and that the first integral is the LRL vector:

\begin{verbatim}
(*Defining the variables*)
r = {r1, r2, r3};
v = {v1, v2, v3};
L = v.v/2 + mu/Sqrt[r.r];
A = Cross[v, Cross[r, v]] - mu*r/Sqrt[r.r];
u = {u1, u2, u3};
arbitrary = h[t, r1, r2, r3, v1, v2, v3];
f0 = mu*(u.r)/Sqrt[r.r];
T = (arbitrary - f0)/L;
Xi = Cross[Cross[r, v], u] + T*v;
f = arbitrary;
(*the time dot derivatives are on-flow*)
rDotDot = -mu*(u.r)*r/(r.r)^(3/2);
Tdot = D[T, t] + D[T, {r}].v + D[T, {v}].rDotDot;
XiDot = D[Xi, t] + D[Xi, {r}].v + D[Xi, {v}].rDotDot;
fDot = D[f, t] + D[f, {r}].v + D[f, {v}].rDotDot;

(*checking on-flow Killing-type equation*)
Simplify[
 D[L, t]*T + D[L, {r}].Xi + D[L, {v}].(XiDot - v*Tdot)
  + L*Tdot == fDot]

(*checking LRL vector as first integral*)
Simplify[
 f - L*T - D[L, {v}].(Xi - T*v) == -A.u]
\end{verbatim}

\noindent
Upon evaluation, the code gives \verb!True! and \verb!True! in an instant.

Solutions of the Killing-type equation in the strong sense are fewer, and there is less freedom to simplify formulas. The explicit triples given by Sarlet and Cantrijn~\cite[Sec.~6]{SarletCantrijn}, and by the authors~\cite[Sec.~12]{GorniZampieri} are for dimension~2. Boccaletti an Pucacco~\cite[Sec.~2.2]{Boccaletti} deduce their solution, also in dimension~2, by assuming $\tau\equiv0$ and $\xi$ to be a bilinear function of~$q,\dot q$ and then working out the coefficients. Here we contribute a triple written for dimension~3, where the vector cross product again leads to elegant formulas for the solution $(\tau ,\Xi,f)$, and where we incorporate the multiplicity Corollary~\ref{multiplicityBoth}:
\begin{gather}
  \vec b:=-\vec u \bigl(\vec r\cdot \vec v\bigr)
  -\vec r\bigl(\vec v\cdot \vec u\bigr)
  +\vec v\bigl(\vec u\cdot \vec r\bigr),\\
  \tau =\frac{1}{L}\biggl(h-\vec u\cdot
  \Bigl(\vec v\times \bigl(\vec r\times \vec v\bigr)
  +\frac{\mu}{\lVert \vec r\rVert}\, \vec r\Bigr)\biggr),\\
  \vec \Xi=\frac{1}{L}
  \Bigl(h\,\vec v+\frac12
  \vec v\times\bigl(\vec b\times\vec v\bigr)
  +\frac{\mu}{\lVert \vec r\rVert}\,\vec b\Bigr),\qquad
  f=h,
\end{gather}
where $\vec u\in\R^3$ is an arbitrary parameter vector, as in the previous section. The first integral associated to the triple through Noether's theorem~\eqref{firstintegral} is the same as before:
\begin{equation}
  L\tau +\partial_{\vec v}L\cdot
  \bigl(\vec \Xi-\tau  \vec v\bigr)=
  -\vec u\cdot\Bigl(\vec v\times
  \bigl(\vec r\times  \vec v\bigr)-
  \frac{\mu}{\lVert \vec r\rVert}\,\vec r\Bigr)=-\vec u\cdot\vec A.
\end{equation} 
Again we provide below some \emph{Mathematica} code that implements the solution triple and checks that it solves the Killing-type equation in the strong sense, and that it gives the Laplace-Runge-Lenz first integral.

\begin{verbatim}
(*Defining the variables*)
r = {r1, r2, r3};
v = {v1, v2, v3};
L = v.v/2 + mu/Sqrt[r.r];
A = Cross[v, Cross[r, v]] - mu*r/Sqrt[r.r];
u = {u1, u2, u3};
arbitrary = h[t, r1, r2, r3, v1, v2, v3];
T = (arbitrary - u.(Cross[v, Cross[r, v]] + mu*r/Sqrt[r.r]))/L;
b = -u*(r.v) - r*(u.v) + v*(r.u);
Xi = (arbitrary*v + Cross[v, Cross[b, v]]/2 + mu*b/Sqrt[r.r])/L;
f = arbitrary;
(*the time dot derivatives are generic, not on-flow*)
rDotDot = {a1, a2, a3};
Tdot = D[T, t] + D[T, {r}].v + D[T, {v}].rDotDot;
XiDot = D[Xi, t] + D[Xi, {r}].v + D[Xi, {v}].rDotDot;
fDot = D[f, t] + D[f, {r}].v + D[f, {v}].rDotDot;

(*checking Killing-type equation*)
Simplify[
 D[L, t]*T + D[L, {r}].Xi + D[L, {v}].(XiDot - v*Tdot)
  + L*Tdot == fDot]

(*checking LRL vector as first integral*)
Simplify[
 f - L*T - D[L, {v}].(Xi - T*v) == -A.u]\end{verbatim}

\noindent
The evaluation gives \verb!True!, as expected.


\section{Killing-like equation for a different time\\ change}
\label{differentTimeChangeSection}

Infinitesimal invariance up to boundary terms usually refers to the dependence of the quantity
\begin{equation}\label{actionWithInverseTimeChange}
  \int_{\bar t_1}^{\bar t_2}L\Bigl(\bar t,\bar q(\bar t),
  \frac{d\bar  q}{d\bar t}(\bar t)\Bigr)d\bar t
\end{equation}
with respect to $\varepsilon$, as in equation~\eqref{infinitesimalInvariance} of Section~\ref{introduction}, where again
\begin{equation}\label{timeAndSpaceChange}
  \bar t_\varepsilon(t)=t+\varepsilon
  \tau\bigl(t,q(t),\dot q(t)\bigr),\qquad
  \bar q_\varepsilon(t)=q+\varepsilon
  \xi\bigl(t,q(t),\dot q(t)\bigr).
\end{equation}
In an earlier paper on Noether's theorem~\cite{GorniZampieri} we proposed, among other things, a generalization of infinitesimal invariance that leads to nonlocal constants of motion, and also, more to the point here, that infinitesimal invariance with time change can be based on the following expression
\begin{equation}\label{actionWithNewTimeChange}
  \int_{\bar t_\varepsilon(t_1)}^{
  \bar t_\varepsilon(t_2)}L\Bigl(t,\bar q_\varepsilon(t),
  \frac{d\bar  q_\varepsilon}{dt}(t)\Bigr)dt.
\end{equation}
instead of~\eqref{actionWithInverseTimeChange} \cite[Sec.~4]{GorniZampieri}. What is different is that the time derivative of $\bar q_\varepsilon$ and the integration in~\eqref{actionWithNewTimeChange} is made by respect to the original time~$t$, whilst in Section~\ref{introduction} the derivative was made with respect to the transformed time~$\bar t$.

We will say that the transformation~\eqref{timeAndSpaceChange} leaves the action integral \emph{alternatively}-invariant up to boundary terms if a function $f(t,q,\dot q)$ exists, such that for all $t_1,t_2$ we have
\begin{multline}\label{infinitesimalInvarianceAlternative}
  \int_{\bar t_\varepsilon(t_1)}^{
  \bar t_\varepsilon(t_2)}L\Bigl(t,\bar q_\varepsilon(t),
  \frac{d\bar  q_\varepsilon}{dt}(t)\Bigr)dt+{}\\
  +\varepsilon\int_{t_1}^{t_2}
  \frac{df}{dt}\bigl(t,q(t),\dot q(t)\bigr)dt
  +O(\varepsilon^2)
  \quad\text{as }\varepsilon\to0.
\end{multline}
To translate this condition into a differential equation, we take the integral
\begin{equation}
  \int_{\bar t_\varepsilon(t_1)}^{
  \bar t_\varepsilon(t_2)}L\Bigl(t,\bar q_\varepsilon(t),
  \frac{d\bar  q_\varepsilon}{dt}(t)\Bigr)dt
\end{equation}
and replace the $t$ variable with $\bar t_\varepsilon(t)$. The integral becomes with fixed extrema $t_1,t_2$:
\begin{equation}
  \int_{t_1}^{t_2}L\bigl(\bar t_\varepsilon(t),
  \bar q_\varepsilon(\bar t_\varepsilon(t)),
  \dot{\bar q}(\bar t_\varepsilon(t))\bigr)
  \dot{\bar t}_\varepsilon(t)\,dt.
\end{equation}
The first-order expansion of this expression as $\varepsilon\to0$ is
\begin{multline}
  \int_{t_1}^{t_2}L\bigl(t,q(t),\dot q(t)\bigr)dt+{}\\
  +\varepsilon\int_{t_1}^{t_2}
  \bigl(\tau\partial_t L
  +\partial_q L\cdot (\xi+\tau\dot q)
  +\partial_{\dot q}L\cdot(\dot\xi+\tau\ddot q)
  +L\dot\tau\bigr)dt
  +O(\varepsilon^2).
\end{multline}
Hence the alternative invariance~\eqref{infinitesimalInvarianceAlternative} is equivalent to the following \emph{alternative} Killing-type equation for ODEs:
\begin{equation}\label{Killing-typeAlternative}
  \tau\partial_t L
  +\partial_q L\cdot (\xi+\tau\dot q)
  +\partial_{\dot q}L\cdot(\dot\xi+\tau\ddot q)
  +L\dot\tau=
  \dot f.
\end{equation}
The commonly made assumption that $\tau,\xi,f$ only depend on~$(t,q)$ eliminated~$\ddot q$ for the standard Killing-type equation~\eqref{Killing-type}, collapsing the on-flow and the strong interpretation. The alternative equation~\eqref{Killing-typeAlternative} does not lend itself to this simplification, so that we are forced to take position on how to understand~$\ddot q$: either as an independent $n$-dimensional variable (strong form), or as a shorthand for the $\Lambda(t,q,\dot q)$ of the Lagrange equation~\eqref{normalformLagrange} (on-flow form).

If a triple $(\tau,\xi,f)$ solves the alternative equation~\eqref{Killing-typeAlternative} in either sense, then the following function is a first integral for the Lagrangian system:
\begin{equation}\label{firstintegralAlternative}
  N=f-L\tau-\partial_{\dot q}L\cdot\xi.
\end{equation}
The expression is different from the corresponding formula~\eqref{firstintegral} for the standard Killing-like equation. Compare however our formula~\eqref{firstintegralAlternative} with  formula~(I.11) from Candotti, Palmieri and Vitale~\cite{Candotti}.

There is a simple correspondence between the solutions to the standard and the alternative Killing-type equations, either in the strong or in the on-flow interpretation: if $(\tau,\xi,f)$ solves the alternative version~\eqref{Killing-typeAlternative} then $(\tau,\xi+\tau\dot q,f)$ solves the standard~\eqref{Killing-type}. Conversely, if $(\tau,\xi,f)$ solves the standard~\eqref{Killing-type} then $(\tau,\xi-\tau\dot q,f)$ solves the alternative~\eqref{Killing-typeAlternative}. This is basically Theorem~8 of the previous paper~\cite{GorniZampieri}, except that the ``alternative'' tag is used in the opposite sense.

Given a first integral~$N$, the associated general solution for the strong interpretation of the standard Killing-type equation are given by Theorem~\ref{familyOfSolutionsWithoutLagrange}. The equivalent statement for the solutions to the alternative Killing-type equation is simply obtained by replacing equations~(\ref{strongSolutionCondition}--\ref{strongSolutionGauge}) with
\begin{gather}\label{strongSolutionConditionAlternative}
  \xi= -g^{-1}\partial_{\dot q}N,\\
  f=\tau L+N-\partial_{\dot q}L\cdot g^{-1}\partial_{\dot q}N.
  \label{strongSolutionGaugeAlternative}
\end{gather}
If we choose $\tau$ so as to get trivial $f\equiv0$, we obtain the alternative strong solution
\begin{equation}
  \label{strongSolutionConditionAlternativeWithTrivialGauge}
  \tau=-\frac{1}{L}\Bigl(N-
  \partial_{\dot q}L\cdot g^{-1}\partial_{\dot q}N\Bigr),\qquad
  \xi= -g^{-1}\partial_{\dot q}N\qquad
  f=0.
\end{equation}
Compare with equation~(I.17) from Candotti, Palmieri and Vitale~\cite{Candotti}.


\end{document}